\newtheorem{prop}{Proposition}[section]
\newtheorem{lema}{Lemma}[section]
\newtheorem{teo}{Theorem}[section]
\theoremstyle{definition}
\def\R{{\mathbb R}}
\title{The Lyapunov spectrum as the Newton method.}
\date{\today}
\author{Godofredo Iommi} \address{Facultad de Matem\'aticas,
Pontificia Universidad Cat\'olica de Chile (PUC), Avenida Vicu\~na Mackenna 4860, Santiago, Chile}
\email{giommi@mat.puc.cl}
\urladdr{http://www.mat.puc.cl/\textasciitilde giommi/}
\begin{document}

\begin{abstract}
For  a class of dynamical systems, the  cookie-cutter maps, we prove that the Lyapunov spectrum coincides with the map given by the Newton-Raphson method applied to the derivative of the pressure function.
\end{abstract}

\maketitle

\section{Introdcution}
In this  note we establish a relation between the \emph{Newton-Raphson} method for finding roots of an equation and  an important function in the dimension theory of dynamical systems, namely the \emph{Lyapunov spectrum.}

The dimension theory of dynamical systems has received a great deal of attention over the last  years (see \cite{ba, pe}). Multifractal analysis is a sub-area of dimension theory devoted to study the complexity of level sets determined by  invariant local quantities. Usually, the geometry of the level sets is complicated and in order to quantify its size the Hausdorff dimension is used. In this note we will focus our attention in one of these local quantities, namely the Lyapunov exponent. Let $T:I \to I$ be a piecewise differentiable  map defined on the unit interval $I=[0,1]$. The  \emph{Lyapunov exponent} of  $T$  at the point $x \in I$, is defined by
\[ \lambda_T(x) = \lambda(x) = \lim_{n \to \infty} \frac{1}{n} \log |(T^n)'(x)|, \]
whenever the limit exists.  It is a dynamical quantity that measures the exponential rate of divergence of infinitesimally close orbits.   It is possible for the Lyapunov exponent to attain a wide range  of values. In order to understand the complexity of the induced level sets  we study the Lyapunov spectrum, which is the function defined by 
\begin{equation*}
L(\alpha):= \dim_H(\left\{ x \in I :  \lambda(x)= \alpha \right\}).
\end{equation*}
Here, $\dim_H$ denotes the Hausdorff dimension of a set (see \cite[Chapter 2]{fa} for a precise definition). This function can be described using thermodynamical formalism, in particular it can be written as a function of the topological pressure $P(-t \log |T'|)$, see Section \ref{ter} for  precise definitions and  statements.

We will relate the Lyapunov spectrum with the function used to apply the Newton-Raphson method. Given a real function $f:\R \to \R$, under certain assumptions,  the iterates of the Newton map $N_f(x)$ converge to a root of the equation $f(x)=0$ (for precise statements see section \ref{N}). The study of this function and its dynamic behaviour has attracted a great deal of attention (see \cite{mi}).

The main result in this note is that, for a certain class of dynamical systems, the Lyapunov spectrum equals the Newton map of the topological pressure $P(-t \log |T'|)$, that is
\begin{equation*}
L(  -P'(-t \log |T'|))= N_{P}(t).
\end{equation*}

\section{The Legendre transform and the Newton  method}

Let $f:\R \to \R$ be a strictly decreasing convex function of class $C^2$. In what follows we will define its corresponding Legendre transform and the associated Newton map. Moreover, we will show how are they related.

\subsection{Legendre transform}
Let $t ' \in \R$, the line $S_{\alpha}= \alpha(t-t') +f(t')$ is called a \emph{support line}
at $t'$ if it always lie below the graph of $f$.  There exists an interval of the form $[\alpha_{\min} , \alpha_{\max}]$ (this interval need not to be bounded) such that if $\alpha \in [\alpha_{\min} , \alpha_{\max}]$ the graph of $f$ has a support line $S_{\alpha}$ of slope equal to $-\alpha$ and for such $\alpha$ the support line is unique. The \emph{Legendre transform} of $f$ is the function $F:[\alpha_{\min} , \alpha_{\max}] \to \R$ defined as the value of the intercept of $S_{\alpha}$ with  the vertical axis. That is
\begin{equation} \label{leg}
F(\alpha)= \inf \{ f(t) + \alpha t : t \in \R \}.
\end{equation}
It turns out that if $t_{\alpha} \in \R$ is the unique point for which $f'(t_{\alpha})= -\alpha$ then
\begin{equation} \label{leg-1}
F(\alpha)= f(t_{\alpha}) + \alpha t_{\alpha}.
\end{equation}
Let us remark that in this simple setting the support line $S_{\alpha}$ is the tangent to the graph of $f$ at the point $(t_{\alpha}, f(t_{\alpha}))$ and that the interval $[\alpha_{\min} , \alpha_{\max}]$ is uniquely determined by the range of the derivative of $f$. 


Geometrically,  the Legendre transform is  the intercept of the tangent line of $f$ with  the vertical axis.

\subsection{The Newton-Raphson method} \label{N}

The Newton-Raphson method is an algorithmic method that provides a sequences of approximations to the root of the equation $f(t)=0$. The algorithm is based on the study of the dynamics of the so called \emph{Newton map}. This map is defined by \begin{equation} \label{new}
N_f(t)= t -\frac{f(t)}{f'(t)}.
\end{equation}
Note that since the function $f$ is strictly decreasing the Newton map is defined over the 
real numbers (the function $f$ has no critical values).  It turns out that a point $ d \in \R$ is a root of $f$ if and only if it is a fixed point for $N_f$. Moreover, since
\begin{equation} \label{der-n}
N'_f(t)= \frac{f(t) f''(t)}{(f'(t))^2},
\end{equation}
we have that $N'(d)=0$. In particular, the point $d$ is an attractor for  $N_f$ and iterations of the Newton map converge to the root $d$. That is, for any $x \in \R$ the sequence
$\{x, N_f(x), N^2_f(x), \dots , N_f^n(x), \dots\}$ converges to the root.

Geometrically, the procedure involved in this algorithm is to consider the tangent to the graph of $f$ at the point $(t, f(t))$ and define the Newton transformation as the intercept of the tangent with  the horizontal axis.

\subsection{A relation between the Legendre transform and the Newton method}
In virtue of the above we have that in order to compute the Legendre transform and the Newton map of $f$ we need to compute the tangent to $f$.
\begin{lema} \label{rel}
Let $f:\R \to \R$ be a strictly decreasing convex function of class $C^2$ and let  
$t_{\alpha} \in \R$ be a point such that $f'(t_{\alpha})= -\alpha$. Then
\begin{equation} \label{rel}
N_f(t_{\alpha}) =\frac{1}{\alpha}F(\alpha).
\end{equation}
\end{lema}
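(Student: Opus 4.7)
The plan is to prove Lemma \ref{rel} by direct substitution, using both the explicit formula for the Newton map from equation \eqref{new} and the closed form of the Legendre transform from equation \eqref{leg-1}. Since the hypothesis $f'(t_\alpha)=-\alpha$ provides exactly the ingredient that appears in the denominator of the Newton map, the argument should reduce to a one-line algebraic manipulation.

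First I would expand $N_f(t_\alpha)$ using \eqref{new}, obtaining $N_f(t_\alpha)=t_\alpha-\frac{f(t_\alpha)}{f'(t_\alpha)}$, and then substitute $f'(t_\alpha)=-\alpha$ to rewrite this as $t_\alpha+\frac{f(t_\alpha)}{\alpha}$. Next I would invoke \eqref{leg-1} to expand $\frac{1}{\alpha}F(\alpha)=\frac{1}{\alpha}\bigl(f(t_\alpha)+\alpha t_\alpha\bigr)=\frac{f(t_\alpha)}{\alpha}+t_\alpha$. Comparing the two expressions yields the identity \eqref{rel}.

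There is essentially no obstacle here; the content of the lemma is entirely geometric and has already been foreshadowed in the text. Both $N_f(t_\alpha)$ and $F(\alpha)$ are determined by the same tangent line to the graph of $f$ at the point $(t_\alpha,f(t_\alpha))$: the Newton map records the intersection of this tangent with the horizontal axis, while the Legendre transform records the intersection with the vertical axis. The relation $N_f(t_\alpha)=\frac{1}{\alpha}F(\alpha)$ is then simply the statement that, for a line of slope $-\alpha$ with vertical intercept $F(\alpha)$, the horizontal intercept equals $F(\alpha)/\alpha$. I would likely include this geometric remark to justify the identity conceptually before, or instead of, the direct calculation.
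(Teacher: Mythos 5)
Your proof is correct and follows essentially the same route as the paper: substitute $f'(t_\alpha)=-\alpha$ into \eqref{new}, expand $F(\alpha)$ via \eqref{leg-1}, and compare. The geometric remark about the tangent line's two intercepts is a nice conceptual gloss that the paper gestures at in the surrounding text but does not include in the proof itself.
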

\begin{proof}
Note that since $t_{\alpha} \in \R$ is a point such that $f'(t_{\alpha})= -\alpha$ then in virtue of equation \eqref{new} the Newton map is given by
\[N_f(t_{\alpha})= t_{\alpha} +\frac{f(t_{\alpha})}{\alpha}\]
and the Legendre transform (see equation \eqref{leg-1}) is given by
\[F(\alpha)= f(t_{\alpha}) + \alpha t_{\alpha}.\]
The result now follows.
\end{proof}

\section{Thermodynamic formalism} \label{ter}
A large class of interesting dynamical systems have many invariant measures. It is, therefore, an important problem to find criteria to choose \emph{relevant} ones. \emph{Thermodynamic formalism} is a set of ideas and techniques which derive from statistical mechanics and that was brought into dynamics in the early seventies by Ruelle and Sinai among others (see the books \cite{Kellbook,Ru_book, Waltbook} for a detailed and nice exposition of the subject). This formalism  provides procedures for the choice  of interesting measures.  In this note we will consider the following class of dynamical systems. Given a pairwise disjoint finite family of closed intervals $I_1, \dots, I_n$
contained in $[0,1]$ we say that a map:
$$T: \bigcup_{i=1}^n I_i \to [0,1]$$
is a {\sf cookie-cutter map with $n$ branches} if the following holds:
\begin{enumerate}
\item $T(I_i)= [0,1]$ for every $i \in \{1 , \dots, n\}$,
\item The map $T$ is of class $C^{1 + \epsilon}$ for some $\epsilon >0$,
\item $|T' (x)| > 1$ for every $x \in I_1 \cup \cdots \cup I_n $.
\end{enumerate}
The dynamics is concentrated in the repeller $\Lambda$  of $T$, which defined by
\[ \Lambda:= \bigcap_{n=0}^{\infty} T^{-n} I.      \]
One of the most important objects in thermodynamic formalism is the so called  \emph{topological pressure} of $T$. It is defined by
\begin{equation} \label{pre}
P(t):=\sup \left\{ h(\mu) - t \int \log |T'| \ d\mu : \mu \in \mathcal{M}  \right\},
\end{equation}
where  $\mathcal{M}$ denotes the space of $T-$invariant probability measures and $h(\mu)$ the entropy of the measure $\mu$ (see \cite[Chapter 4]{Waltbook} for the definition and properties). For each $t \in \R$ there exists a unique measure $\mu_t$ attaining the supremum in equation \eqref{pre}  that we call \emph{equilibrium measure}
(see \cite[Chapter 5]{PU}).  It is well known that the pressure function $t \to P(t)$ is real analytic, strictly convex, strictly decreasing  and that $P'(t)= -\int \log |T'| \ d \mu_t$ (see \cite[Chapter 3,4]{pp} and \cite[Chapter 5]{PU}).

\subsection{Newton map}
A classical result by Bowen \cite{bo} later generalised by Ruelle \cite{ru} establishes a relation between the topological pressure and the Hausdorff dimension of the repeller.

\begin{prop}[Bowen's formula]
Let $T$ be a cookie cutter map and let $d=\dim_H \Lambda$ then $d$ is the unique root of the equation
\[P(-t \log |T'|)=0.\]
\end{prop}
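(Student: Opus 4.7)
The plan is to split the argument into a uniqueness part and an existence part, the latter following the classical two sided estimate for the Hausdorff dimension of $\Lambda$.

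For uniqueness, I would first define $\varphi(t) := P(-t\log|T'|)$. Since $|T'|>1$ on $I_1\cup\cdots\cup I_n$, the potential $\log|T'|$ is a strictly positive continuous function on $\Lambda$, so the map $t\mapsto -t\log|T'|$ decreases uniformly in $t$. Combined with the monotonicity and analyticity of the pressure stated in Section \ref{ter}, this makes $\varphi$ real analytic and strictly decreasing. Since $\varphi(0)=P(0)=h_{\mathrm{top}}(T|_{\Lambda})>0$ (the system is expanding with several branches) and $\varphi(t)\to -\infty$ as $t\to\infty$, the equation $\varphi(t)=0$ has a unique real root, call it $s$.

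For the existence, namely $s=\dim_H \Lambda$, I would use the cylinder structure. Denote by $I_{i_1\cdots i_n}$ the level$-n$ cylinder consisting of points whose orbit itinerary begins with $i_1,\dots,i_n$. The $C^{1+\epsilon}$ regularity of $T$ together with the expansion $|T'|>1$ gives the standard bounded distortion estimate: there exists $K>1$ such that for every $n$, every cylinder $I_{i_1\cdots i_n}$ and every pair $x,y\in I_{i_1\cdots i_n}$,
\begin{equation*}
K^{-1}\le \frac{|(T^n)'(x)|}{|(T^n)'(y)|}\le K,
\end{equation*}
and hence $|I_{i_1\cdots i_n}|\asymp |(T^n)'(x)|^{-1}$ for any $x$ in the cylinder. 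Now invoke the characterisation of the pressure as
\begin{equation*}
P(-t\log|T'|)=\lim_{n\to\infty}\frac{1}{n}\log\sum_{i_1,\dots,i_n}|(T^n)'(x_{i_1\cdots i_n})|^{-t}
\end{equation*}
for any choice of points $x_{i_1\cdots i_n}\in I_{i_1\cdots i_n}$.

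For the upper bound $\dim_H\Lambda\le s$, I would plug $t=s$ into the above. Since $P(-s\log|T'|)=0$, the sums $\sum |I_{i_1\cdots i_n}|^s$ remain bounded in $n$ (up to the distortion constant). The cylinders of generation $n$ form a cover of $\Lambda$ whose mesh tends to zero uniformly (because $|T'|>1$), so the $s$-dimensional Hausdorff measure of $\Lambda$ is finite, giving $\dim_H\Lambda\le s$. For the lower bound $\dim_H\Lambda\ge s$, I would use the equilibrium measure $\mu_s$ associated to the potential $-s\log|T'|$ (which exists and is a Gibbs measure by the results quoted in Section \ref{ter}). The Gibbs property together with $P(-s\log|T'|)=0$ and bounded distortion give
\begin{equation*}
\mu_s(I_{i_1\cdots i_n})\asymp |(T^n)'(x)|^{-s}\asymp |I_{i_1\cdots i_n}|^s,
\end{equation*}
and one concludes by the mass distribution principle (any ball of radius $r$ centred on $\Lambda$ is contained in a bounded number of cylinders whose diameter is comparable to $r$).

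The step I expect to be the main obstacle, or at least the most technical, is establishing bounded distortion and extracting the Gibbs property of $\mu_s$ in the cookie-cutter setting; once these two tools are in place both inequalities for $\dim_H\Lambda$ are routine covering arguments. Everything else reduces to the monotonicity and analyticity of the pressure already recorded in Section \ref{ter}.
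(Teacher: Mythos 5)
The paper does not prove this proposition: it is stated as a classical result and the proof is delegated to the references, namely Bowen \cite{bo} and Ruelle \cite{ru}. There is therefore no ``paper's own proof'' to compare against. Your argument is correct and is in fact the standard modern presentation of Bowen's formula in the cookie-cutter setting: uniqueness of the root from monotonicity, positivity, and coercivity of $t\mapsto P(-t\log|T'|)$ (using $\log|T'|>0$ and $P(0)=h_{\mathrm{top}}>0$, which implicitly requires at least two branches), and the two-sided dimension estimate via bounded distortion on cylinders, a Moran-type covering for $\dim_H\Lambda\le s$, and the Gibbs property of the equilibrium state $\mu_s$ together with the mass distribution principle for $\dim_H\Lambda\ge s$. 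This is essentially the route taken in Bowen's original paper (rephrased in terms of cylinders rather than quasi-circles) and in the textbook treatments in \cite{fa} and \cite{pe}; I see no gap, and the two ingredients you flag as the technical core (bounded distortion from $C^{1+\epsilon}$ regularity and the Gibbs property) are indeed where the real work lies.
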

Let us denote by $N_P(t)$ the Newton map of the pressure function $P(-t \log |T'|)$, that is
\[N_P(t)=  t - \frac{P(-t \log |T'|)}{P'(-t \log |T'|)}.\]
We have that  $N_P(d)=d$ and  the iterations of $N_P(t)$ converge to the unique root of the pressure, that is
\[\lim_{n \to \infty} N_P^n(t)=d.\]

\subsection{Lyapunov spectrum} \label{lya}

The Lyapunov exponent of any  given cookie cutter can attain an interval of values $[\alpha_{\min}, \alpha_{\max}]$.
For $\alpha \in [\alpha_{\min}, \alpha_{\max}]$ consider,
\begin{equation*}
J(\alpha)= \left\{ x \in I :  \lambda(x)= \alpha \right\} \text{ and } J'= \left\{  x \in I : \textrm{the limit } \lambda(x) \textrm{ does not exists} \right\}.
 \end{equation*}
These level sets induce a decomposition of the repeller $\Lambda$  of $T$. Indeed, $\Lambda = J' \cup \left( \cup_{\alpha} J(\alpha) \right).$ The function that encodes this decomposition is called \emph{Lyapunov spectrum} and it is defined by 
\begin{equation*}
L(\alpha):= \dim_H(J(\alpha)).
\end{equation*}
H. Weiss \cite{w1}, building up on previous joint work with Pesin \cite{pw1}, proved that the Lyapunov spectrum is real analytic. A rather surprising result in light of the fact that this decomposition is fairly complicated. For instance,  each level set turns out to be dense in $\Lambda$ (see \cite{w1}). By means of the thermodynamic formalism it is possible to obtain formula for the Lyapunov spectrum. Indeed,
\begin{equation}
\label{WeissFormula}
L(\alpha) = \frac{1}{\alpha} \inf_{t \in \mathbb{R}} (P(-t \log |T'|) +t \alpha).
\end{equation}
This formula follows form the work of Weiss \cite{w1} and can be found explicitly, for instance, in the work of Kesseb\"ohmer and Stratmann \cite{ks} (see also \cite{ik} for some explicit computations). Actually, in this setting, the Lyapunov spectrum can be written as
\begin{equation*} 
L(\alpha)=  \frac{1}{\alpha} (P(-t_{\alpha} \log |T'|) +t_{\alpha} \alpha),
\end{equation*}
where $t_{\alpha}$ is the unique real number such that 
\[ -\dfrac{d}{d t} P(-t \log |T'|) \Big|_{t=t_{\alpha}}  = \alpha. \]
If we denote by $\mu_{\alpha}$  the unique equilibrium measure corresponding to the potential  $-t_{\alpha} \log |T'|$ we have that
\[\int \log |T'| \ d\mu_{\alpha} = \alpha.\]
Therefore,
\begin{equation}
\label{lyapunov}
L(\alpha)=  \frac{1}{\alpha} (P(-t_{\alpha} \log |T'|) +t_{\alpha} \alpha),
\end{equation}
After the substitution $\alpha = \alpha(t)$, equation~\eqref{lyapunov} becomes:
\begin{equation} 
\label{lt}
L(\alpha(t))=  \frac{1}{\alpha(t)} \left( P(-t  \log |T'|)  +t \alpha(t) \right).  
\end{equation}

\section{Main result}
In virtue of the comments  we have made so far, we can now establish a relation between the  Lyapunov spectrum of a cookie cutter map and the Newton map corresponding to the pressure function. 
\begin{teo}
Let $T$ be a cookie cutter map, then for every $t \in \R$ we have 
\begin{equation}
 L(  -P'(-t \log |T'|))= N_{P}(t).
 \end{equation}
\end{teo}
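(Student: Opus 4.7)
The plan is to observe that the theorem is essentially Lemma \ref{rel} applied to the specific function $\psi(t):=P(-t\log|T'|)$, once the Lyapunov spectrum formula \eqref{lt} is rewritten in a suggestive way. The core calculation is just a matching of two expressions, so the proof is short; the real content has already been packaged into the earlier lemma and into formula \eqref{WeissFormula}.

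First I would check that $\psi(t)=P(-t\log|T'|)$ satisfies the hypotheses of Lemma \ref{rel}. This is exactly the content of Section \ref{ter}: the pressure is real analytic, strictly convex and strictly decreasing as a function of $t$, with derivative $\psi'(t)=-\int\log|T'|\,d\mu_{t}$. In particular $-\psi'(t)=\alpha(t)>0$ lies in $[\alpha_{\min},\alpha_{\max}]$, and the unique $t_{\alpha}$ from Section \ref{N}--\ref{lya} is exactly $t$ itself when we set $\alpha=\alpha(t)$.

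Next I would rewrite the Lyapunov spectrum. Starting from equation \eqref{lt},
\begin{equation*}
L(\alpha(t))=\frac{1}{\alpha(t)}\bigl(\psi(t)+t\,\alpha(t)\bigr)=t+\frac{\psi(t)}{\alpha(t)},
\end{equation*}
while the Newton map of $\psi$ is
\begin{equation*}
N_{P}(t)=t-\frac{\psi(t)}{\psi'(t)}=t-\frac{\psi(t)}{-\alpha(t)}=t+\frac{\psi(t)}{\alpha(t)}.
\end{equation*}
Substituting $\alpha(t)=-P'(-t\log|T'|)$ on the left-hand side then yields
\begin{equation*}
L\bigl(-P'(-t\log|T'|)\bigr)=N_{P}(t),
\end{equation*}
which is the desired identity. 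Alternatively, one may simply invoke Lemma \ref{rel} with $f=\psi$: the left-hand side of \eqref{rel} is $N_{P}(t)$, and the right-hand side $\frac{1}{\alpha}F(\alpha)$ coincides with $L(\alpha(t))$ because the Legendre transform of $\psi$ is exactly the infimum appearing in the Weiss formula \eqref{WeissFormula}.

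There is no serious obstacle; the only thing worth spelling out carefully is the sign bookkeeping between $\psi'(t)$ and $\alpha(t)$, and the fact that the infimum in \eqref{WeissFormula} is attained precisely at $t=t_{\alpha}$, so that the formula \eqref{lt} is legitimately the Legendre transform of $\psi$ evaluated at $\alpha(t)$ (divided by $\alpha(t)$). Once that identification is made, the theorem is immediate from Lemma \ref{rel}.
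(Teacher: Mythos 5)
Your proof is correct and follows essentially the same route as the paper: starting from formula \eqref{lt} for the Lyapunov spectrum and matching it against the Newton map of $\psi(t)=P(-t\log|T'|)$, either by direct algebra or by citing Lemma \ref{rel}. Your version is somewhat more explicit about the sign bookkeeping and about checking the hypotheses of the lemma, but the underlying argument is identical to the one in the paper.
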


\begin{proof}
The result follows noticing that the Lyapunov spectrum is given by (see equation \eqref{lt})
\begin{equation*} 
L(-P'(-t\log |T'|))=  \frac{1}{-P'(-t\log |T'|)} \left( P(-t  \log |T'|)  -t P'(-t\log |T'|) \right).  
\end{equation*}
This combined with Lemma \ref{rel} yields the desired result.
\end{proof}


%


%




%


%

\end{document}